\newcommand{\evid}[1]{\textsf{#1}}
\newcommand{\fin}{\mathrm{fin}}
\begin{document}
\mainmatter              
\title{On the isomorphism problem \\ for power semigroups}
\titlerunning{On the isomorphism problem for power semigroups}  
%
\author{Salvatore Tringali\inst{1}}
\authorrunning{Salvatore Tringali} 
%
\tocauthor{Salvatore Tringali}
\institute{School of Mathematical Sciences, Hebei Normal University \\ Shijiazhuang, Hebei province, 050024 China\\
\email{salvo.tringali@gmail.com},\\ WWW home page:
\texttt{http://imsc.uni-graz.at/tringali}}

\maketitle              

\begin{abstract}
Let $\mathcal P(S)$ be the semigroup obtained by equipping the family of all non-empty subsets of a (multiplicatively written) semigroup $S$ with the operation of setwise multiplication induced by $S$ itself. 
We call a sub\-sem\-i\-group $P$ of $\mathcal P(S)$ downward complete if any element of $S$ lies in at least one set $X \in P$ and any non-empty subset of a set in $P$ is still in $P$.

We obtain, for a commutative semigroup $S$, a characterization of the cancellative elements of a downward complete subsemigroup of $\mathcal P(S)$ in terms of the cancellative elements of $S$. Consequently, we show that, if $H$ and $K$ are cancellative semigroups and either of them is commutative, then every isomorphism from a downward complete subsemigroup of $\mathcal P(H)$ to a downward complete subsemigroup of $\mathcal P(K)$ restricts to an isomorphism from $H$ to $K$. 
This solves a special case of a problem of Tamura and Shafer from the late 1960s and generalizes a recent result by Bienvenu and Geroldinger, where it is assumed, among other conditions, that $H$ and $K$ are numerical monoids.
\keywords{Isomorphism problems, numerical monoids, power algebras, cancellative semigroups, sumsets, setwise products.}
\end{abstract}
\section{Introduction}

Let $S$ be a semigroup (see the end of the section for notation and terminology). Equipped with the binary op\-er\-a\-tion of setwise multiplication induced by $S$ on its own power set and defined by
$$
(X, Y) \mapsto \{xy \colon x \in X,\, y \in Y\},
$$
the \emph{non-empty} subsets of $S$ form a semigroup, herein denoted by $\mathcal P(S)$ and called the \evid{large power semigroup} of $S$.
Furthermore, the non-empty \emph{finite} subsets of $S$ make up a sub\-semi\-group of $\mathcal P(S)$, which we denote by $\mathcal P_\fin(S)$ and call the \evid{finitary power semigroup} of $S$ (see~Example \ref{exa:1.1} for a generalization). Below, we will refer to either of these structures as a \evid{power semigroup} of $S$.

Power semigroups offer a natural algebraic framework for many important problems in ad\-di\-tive combinatorics and related fields, from Ostmann's conjecture \cite[p.~13]{Ostm-1968} on the ``asymptotic additive irreducibility'' of the set $\mathbb P$ of (positive rational) primes to S\'ark\"ozy's conjecture \cite[Conjecture 1.6]{Sar2002} on the ``additive irreducibility'' of the set of quadratic residues modulo $p$ for all large $p \in \mathbb P$. More broadly, the arithmetic of power semigroups --- with a focus on questions related to the possibility or impossibility of factoring certain sets as a product of other sets that are, in a way, ``irreducible'' --- is a rich topic in itself \cite{Fa-Tr18} \cite{An-Tr18} \cite{Bie-Ger-22} \cite{Tri-Yan2023(a)} and has been serving as a guiding example in the development of a ``unifying theory of factorization'' \cite{Tr20(c)} \cite{Co-Tr-21(a)} \cite{Cos-Tri2023} \cite{Cos-Tri2023b} \cite{Tri23} whose primary aim is to extend the classical theory \cite{GeHK06} \cite{GeZh19} beyond its current boundaries and make it usable in non-traditional settings.

As it turns out, $\mathcal P(S)$ was first \emph{systematically} studied by Tamura and Shafer \cite{Tam-Sha1967} \cite{Ham-Nor2009} in the 1960s (its definition can at least be traced back to the early literature on power algebras \cite[Sect.~2]{Bri-1993}) and gained substantial attention in the 1980s and 1990s due to its role in the theory of automata and formal languages \cite{Alm02} \cite{Pin1995}. This led to the following question, arisen from Tamura and Shafer's work and commonly known as the \evid{isomorphism problem for power semigroups}.
\begin{question}
Suppose that a semigroup $H$ is \evid{globally isomorphic} to a semigroup $K$, meaning that $\mathcal P(H)$ is (semigroup-)isomorphic to $\mathcal P(K)$. Is it necessarily true that $H$ is isomorphic to $K$? 
\end{question}

The question was quickly answered in the negative by Mogiljanskaja \cite{Mog1973}, but remains open for \emph{finite} semigroups \cite[p.~5]{Ham-Nor2009} (despite some authors claiming differently based on results that Tamura himself announced, \emph{without proof}, in \cite{Tam-1987}). More generally, one can ask:

\begin{question}\label{quest:2}
Given a class $\mathcal C$ of semigroups, prove or disprove that $H$ is globally isomorphic to $K$, for some $H, K \in \mathcal C$, if and only if $H$ is isomorphic to $K$.
\end{question}

It is easy to check (see Remark \ref{rem:2.1(3)} and cf.~\cite[Remark 1.1]{Tri-Yan2023(a)}) that, if $f \colon H \to K$ is a semigroup iso\-mor\-phism, then the same is true of the function 
$$
\mathcal P(H) \to \mathcal P(K) \colon \allowbreak X \mapsto f[X],
$$
where $f[X] := \{f(x) \colon x \in X\} \subseteq K$ is the (direct) image of $X$ under $f$. Therefore, the interesting aspect of Question \ref{quest:2} lies entirely in the ``only if'' condition.
E.g., the answer to the question is positive for groups \cite{Sha67}, completely $0$-simple semi\-groups \cite{Tam-86} \cite{Tam-90}, and Clifford semi\-groups \cite{Gan-Zhao-2014}; is negative for involution semi\-groups \cite{Crve-Doli-Vinc-2001}; and appears to be unknown for cancellative semigroups (see \url{https://mathoverflow.net/questions/456604/} for further discussion).

The present paper adds to this line of research. More precisely, we say that a subsemigroup $P$ of the large power semigroup of a semigroup $S$ is \evid{downward complete} if each element of $S$ lies in at least one set $X \in P$ and every non-empty subset of a set in $P$ is itself in $P$. The following examples illustrate the property of downward completeness and will come in handy later.

\begin{example}
\label{exa:1.1}
$\mathcal P(S)$ and $\mathcal P_\fin(S)$ are downward complete subsemigroups of $\mathcal P(S)$ for any semigroup $S$; and so is the set of all one-element subsets of $S$, which is obviously isomorphic to $S$ through the embedding $S \to \mathcal P(S) \colon x \mapsto \{x\}$. 

More generally, let $\kappa$ be an \emph{infinite} cardinal number. By basic principles of set theory (in any of the standard foundations of mathematics), each of the families 
$$
\mathcal P_{< \kappa}(S) := \{X \subseteq S \colon 1 \le |X| < \kappa\}
$$
and
$$
\mathcal P_{\le \kappa}(S) := \{X \subseteq S \colon 1 \le |X| \le \kappa\}
$$
is a downward complete subsemigroup of $\mathcal P(S)$. In particular, we have that $\mathcal P_{< \kappa}(S) = \mathcal P_\fin(S) $ when $\kappa$ is the cardinality of $\mathbb N$, and $\mathcal P_{\leq \kappa}(S) =\mathcal P(S)$ when $\kappa$ is the maximum between $|\mathbb N|$ and $|S|$.
\end{example}


\begin{example}
\label{exa:1.2}
Let $S$ be a semigroup and $\sim$ be an equivalence relation on (the carrier set of) $S$ with the additional property that $x_1 \sim y_1$ and $x_2 \sim y_2$ imply $x_1 x_2 \sim y_1 y_2$, i.e., $\sim$ is a (\evid{semigroup}) \evid{congruence} on $S$. Denote by $\mathcal P_\sim(S)$ the family of all non-empty sets $X \subseteq S$ such that $x \sim y$ for any $x, y \in X$.

Every non-empty subset of a set in $\mathcal P_\sim(S)$ is, of course, itself in $\mathcal P_\sim(S)$; and since equivalence relations are reflexive, it is also clear that $\{x\} \in \mathcal P_\sim(S)$ for each $x \in S$. On the other hand, we are guaranteed by the general properties of congruences that $XY \in \mathcal P_\sim(S)$ for all $X, Y \in \mathcal P_\sim(S)$, because $XY \subseteq [xy]_\sim$ for every $x \in X$ and $y \in Y$, where $[z]_\sim$ is the equivalence class of an element $z \in S$ in the quotient set $S/{\sim}$. Therefore, $\mathcal P_\sim(S)$ is a downward complete subsemigroup of $\mathcal P(S)$. 
\end{example}


\begin{example}
\label{exa:1.3}
Given a semigroup $S$, the intersection of any indexed family of downward complete subsemigroups of $\mathcal P(S)$ is itself a downward complete subsemigroup of $\mathcal P(S)$. The proof is straightforward, on account of the fact that a downward complete subsemigroup of $\mathcal P(S)$ contains, a fortiori, every one-element subset of $S$ (further details are left to the reader).
\end{example}

That being said, we obtain, for a \emph{commutative} semigroup $S$, a characterization of the cancellative elements of a downward complete subsemigroup of $\mathcal P(S)$ in terms of the the cancellative elements of $S$ (Proposition \ref{prop:01}); and we use it in Theorem \ref{thm:2.4} to prove that, if $H$ and $K$ are cancellative semi\-groups and either of them is commutative,
then any isomorphism from a downward complete sub\-se\-mi\-group of $\mathcal P(H)$ to a downward complete subsemigroup of $\mathcal P(K)$ maps singletons to singletons and hence restricts to an iso\-mor\-phism from $H$ to $K$ (up to the embedding mentioned in Example \ref{exa:1.1}).
It follows (Corollary \ref{cor:2.4}) that $H$ and $K$ are globally iso\-mor\-phic if and only if they are isomorphic, if and only if $\mathcal P_\fin(H)$ is isomorphic to $\mathcal P_\fin(K)$. 
We can thus give a positive answer to Question \ref{quest:2} for the class of cancellative commutative se\-mi\-groups and extend a recent result of Bienvenu and Geroldinger \cite[Theorem 3.2(3)]{Bie-Ger-22}, where $H$ and $K$ are numerical monoids (i.e., submonoids of the additive monoid of non-negative integers with finite complement in $\mathbb N$) with $\mathcal P_\fin(H)$ isomorphic to $\mathcal P_\fin(K)$ (see the comments at the end of Sect.~\ref{sect:2}).

\subsection*{Generalities} 
We denote by $\mathbb N$ the (set of) non-negative integers, by $\mathbb N^+$ the positive integers, and by $|\cdot|$ the cardinality of a set. Unless otherwise specified, we write all semigroups multiplicatively. We refer to Howie's monograph \cite{Ho95} for the basic theory of semigroups and monoids. 
In particular, we recall that, given a semigroup $S$, an element $a \in S$ is \evid{left} (resp., \evid{right}) \evid{cancellative} if the map $S \to S \colon x \mapsto ax$ (resp., $S \to S \colon x \mapsto xa$) is injective; and is \evid{cancellative} if it is left and right cancellative.
The semigroup itself is then said to be cancellative if each of its elements is cancellative. Further notation and terminology, if not explained upon first use, are standard or should be clear from the context. 

\section{Results}
\label{sect:2}

The main result of the paper (namely, Theorem \ref{thm:2.4}) is actually a direct consequence of Proposition \ref{prop:01} below, which may be of independent interest. Let us start with a basic remark.

\begin{remark}\label{rem:singletons}
Let $u$ be a left (resp., right) cancellative element in a semigroup $S$, and assume $uX = uY$ (resp., $Xu = Yu$) for some $X, Y \in \allowbreak \mathcal P(S)$. Given $x \in X$, there then exists $y \in Y$ such that $ux = uy$ (resp., $xu = \allowbreak yu$). It follows (by the hypothesis on $u$) that $x = \allowbreak y$ and hence $X \subseteq \allowbreak Y$. By symmetry, we can thus conclude that $X = Y$, which ultimately shows that $\{u\}$ is a left (resp., right) cancellative element in $\mathcal P(S)$.
\end{remark}

The key idea, simple as it may be, is to establish that, in a downward complete subsemigroup of the large power semigroup of a cancellative \emph{commutative} semigroup $S$, the cancellative elements are all and only the one-element subsets of $S$. In fact, we will prove something slightly more general.

\begin{proposition}
\label{prop:01}
Let $S$ be a commutative semigroup. The cancellative elements of a downward complete subsemigroup of $\mathcal P(S)$ are all and only the singletons $\{u\}$ such that $u$ is a cancellative element in $S$.
\end{proposition}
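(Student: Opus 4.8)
The plan is to prove the two directions of the characterization separately, exploiting throughout that commutativity of $S$ makes $\mathcal P(S)$, and hence $P$, a commutative semigroup (so an element is cancellative iff it is, say, left cancellative); that a downward complete $P$ contains every singleton $\{x\}$, $x \in S$ (Example \ref{exa:1.3}); and that $P$ is closed under products and under taking non-empty subsets. The first direction is immediate: if $u$ is cancellative in $S$, then $\{u\} \in P$, and Remark \ref{rem:singletons} gives that $\{u\}$ is cancellative already in $\mathcal P(S)$; since $P$ is a subsemigroup of $\mathcal P(S)$, any relation $\{u\}Y = \{u\}Z$ with $Y, Z \in P$ holds in $\mathcal P(S)$ and thus forces $Y = Z$, so $\{u\}$ is cancellative in $P$ as well.

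For the converse, I would first settle the easy half, namely that the underlying element is cancellative. If $\{u\}$ is cancellative in $P$ but $u$ is not cancellative in $S$, then by commutativity there are $s \neq t$ in $S$ with $us = ut$, whence $\{u\}\{s\} = \{us\} = \{ut\} = \{u\}\{t\}$ with $\{s\} \neq \{t\}$ in $P$ --- a contradiction. It remains to show that a cancellative element $X$ of $P$ is necessarily a singleton, which I would prove in contrapositive form: assuming $|X| \geq 2$, I would produce distinct $Y, Z \in P$ with $XY = XZ$.

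Fix distinct $a, b \in X$ and split into two cases. If some such pair satisfies that $a^2, ab, b^2$ are not all equal, then I claim $ab$ is redundant in $X^2$, i.e.\ $X \cdot (X^2 \setminus \{ab\}) = X \cdot X^2$: for $x \in X$ one rewrites $x(ab)$ as $b(xa)$ or $a(xb)$, which routes it through an element of $X^2 \setminus \{ab\}$ unless $xa = xb = ab$, and in that residual case $x(ab) = ab^2 = a^2 b$ can be read off $b^2$ or $a^2$ instead, one of which differs from $ab$ by the case hypothesis. Since that hypothesis also forces $|X^2| \geq 2$, the set $Y := X^2 \setminus \{ab\}$ is a non-empty proper subset of $Z := X^2$; both lie in $P$, and $XY = X \cdot X^2 = XZ$. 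In the complementary case $a^2 = ab = b^2$ holds for every pair of distinct elements of $X$, and a short computation then collapses all products to a single value $e$, giving $Xa = Xb = \{e\}$; hence $Y := \{a\}$ and $Z := \{b\}$ are distinct singletons of $P$ with $XY = \{e\} = XZ$. In either case $X$ is not cancellative.

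I expect the redundancy step to be the crux. The naive hope that one may always delete $ab$ from $X^2$ without disturbing $X \cdot X^2$ fails exactly in the degenerate configuration $a^2 = ab = b^2$, and the real content is to verify that this is the only obstruction and that it is precisely the configuration disposed of by the singleton witnesses $\{a\}, \{b\}$. A welcome side benefit of organizing the proof this way is that it is entirely cardinality-free, so infinite $X$ is covered with no extra work and no appeal to sumset-growth estimates of the form $|X^n| \gg |X|$, which would anyway require $S$ to be cancellative.
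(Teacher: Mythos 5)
Your proof is correct, and it shares the paper's overall strategy: the two ``singleton'' directions are handled exactly as in the paper (via Remark \ref{rem:singletons} and the observation that $us=ut$ with $s\ne t$ transfers to $\{u\}\{s\}=\{u\}\{t\}$), and the core of the argument is, in both proofs, to exhibit for $|X|\ge 2$ a redundant element whose deletion from a power of $X$ leaves the product with $X$ unchanged. Where you genuinely diverge is in the case decomposition and in the witnesses for the degenerate case. The paper splits on whether some pair of distinct $a,b\in A$ satisfies $a^2=ab$: if so, it deletes $a$ from $A$ itself and shows $A(A\setminus\{a\})=A^2$; if not, it deletes $ab$ from $A^2$ and reroutes $x(ab)$ only through $xa$ (plus the single exception $xa=ab$, handled via $b^2\ne ab$). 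You instead make the deletion of $ab$ from $X^2$ the main case, valid under the weaker hypothesis that $a^2,ab,b^2$ are not all equal; this costs you one extra rerouting of $x(ab)$ (through $b(xa)$, $a(xb)$, or --- in the residual case $xa=xb=ab$ --- through $a(b^2)=b(a^2)$, at least one of which avoids $ab$ by the case hypothesis), and your verification of all three sub-routes is complete and sound. Your complementary case, $a^2=ab=b^2$ for every pair of distinct elements, is then fully degenerate: all products in $Xa$ and $Xb$ collapse to a single value, and the distinct singletons $\{a\},\{b\}$ witness non-cancellativity directly --- a cleaner disposal than the paper's $A$ versus $A\setminus\{a\}$ comparison, at the price of a slightly heavier main case. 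Both arguments are cardinality-free and cover infinite $X$ without any appeal to growth of product sets, so your closing remark applies equally to the paper's proof.
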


\begin{proof}
Let $P$ be a downward complete subsemigroup of $\mathcal P(S)$. If $ux = uy$ for some $u, x, y \in S$ with $x \ne \allowbreak y$, then $\{u\} \allowbreak \{x\} = \{u\} \{y\}$ in $\mathcal P(S)$. Since all one-element subsets of $S$ are in $P$, it is thus clear from Remark \ref{rem:singletons} that $\{u\}$ is a cancellative element in $P$ if and only if $u$ is cancellative in $S$.

It remains to check that every cancellative element of $P$ is a singleton. For, fix $A \in \allowbreak P$ with $|A| \ge \allowbreak 2$. We have to prove that the function 
$$
f \colon P \to P \colon X \mapsto AX
$$
is non-injective. We distinguish two cases.

\vskip 0.05cm

\textsc{Case 1:} There exist $a, b \in A$ with $a^2 = ab$ and $a \ne b$. Set $B := A \setminus \{a\}$. If $a \ne x \in A$, then $x \in B$ and hence $xa = ax \in AB$ (by the commutativity of $S$). Moreover, $a^2 = ab \in AB$ (since $a \ne b$ yields $b \in B$). It follows that $Aa \subseteq AB$ and hence $A^2 = AB \cup Aa \subseteq AB \subseteq \allowbreak A^2$, namely, $f(A) = f(B)$. But this means that $f$ is not an injection (as wished), because $P$ is a downward complete sub\-semi\-group of $\mathcal P(S)$ and $B$ is a non-empty proper subset of $A$ (with the result that $A, B \in P$ and $A \ne B$).

\vskip 0.05cm

\textsc{Case 2:} We are not in \textsc{Case 1}. Pick $a, b \in A$ with $a \ne b$ (recall that $|A| \ge 2$) and set $B := A^2 \setminus \allowbreak \{ab\}$. Since $b^2 \ne ab$ (or else we would fall back to \textsc{Case 1}), we have $b^2 \in B$ and hence $\emptyset \ne B \subsetneq A^2$. Given that $P$ is a downward complete subsemigroup of $\mathcal P(S)$, it follows that $A^2, B \in P$ and $A^2 \ne B$.

Now, let $x \in A$. If $xa = ab$, then it is obvious that $xab = ab^2 \in AB$ (by the fact that $ab \ne \allowbreak b^2 \in \allowbreak B$). Otherwise, $xa \in B$ and again $xab = bxa \in AB$ (by the commutativity of $S$). Consequently, we find that $Aab \subseteq AB$ and hence $A^3 = AB \cup Aab \subseteq AB \subseteq A^3$, namely, $f(A^2) = f(B)$. So, $A$ is a non-cancellative element of $P$ (as we have already observed that $A^2, B \in P$ and $A^2 \ne B$).
\end{proof}

The next example demonstrates that Proposition \ref{prop:01} does not hold for non-commutative cancellative semigroups. This suggests that addressing Question \ref{quest:2} in the cancellative, non-commutative case (assuming the answer is still affirmative) may necessitate a somewhat different approach than the one envisaged in this work for the commutative case.

\begin{example}
Let $V$ be a non-empty set and $\mathscr F^+(V)$ be the free semigroup over $V$. The underlying set of $\mathscr F^+(V)$ consists of all the non-empty finite tuples with components in $V$; and the semigroup operation, hereby denoted by the symbol $\ast\,$, is the (ordered) concatenation of such tuples. 

As a matter of fact, $\mathscr F^+(V)$ is a cancellative semigroup. We claim that every non-empty subset $X$ of $V$ is a cancellative element in the large power semigroup of $\mathscr F^+(V)$, whose operation we continue to denote by $\ast\,$. In particular, this will prove (taking $|V| \ge 2$) that a cancellative element of the large power semigroup of a cancellative semigroup need not be a singleton (in contrast with Proposition \ref{prop:01}).

For the claim, assume $X \ast Y_1 = X \ast Y_2$ for some non-empty sets $Y_1, Y_2 \subseteq \mathscr F^+(V)$ (the case when $Y_1 \ast \allowbreak X = \allowbreak Y_2 \ast X$ is symmetric). We need to check that $Y_1 = Y_2$. To this end, it is clear that, since the elements of $\mathscr F^+(V)$ can be \emph{uniquely} represented as a (non-empty, finite, ordered) concatenation of elements of $V$, the sets $x_1 \ast Y_1$ and $x_2 \ast Y_2$ are disjoint for all $x_1, x_2 \in X$ with $x_1 \ne x_2$. It follows that $X \ast Y_1 = X \ast Y_2$ if and only if $x \ast Y_1 = x \ast Y_2$ for each $x \in \allowbreak X$, which, by Remark \ref{rem:singletons} and the non-emptiness of $X$, implies $Y_1 = Y_2$ (as wished).
\end{example}

We continue with a few additional observations that will prove useful shortly afterwards.

\begin{remark}
\label{rem:2.1(1)}
Let $f \colon H \to K$ be a semigroup isomorphism and $x$ be a left cancellative element of $H$, and pick $u, v \in K$. Since $f$ is onto, there exist $y, z \in H$ such that $u = f(y)$ and $v = f(z)$. Consequently, $f(x) u = f(x) v$ if and only if $f(xy) = f(xz)$, which implies, by the injectivity of $f$, that $xy = xz$. It is thus obvious from the left cancellativity of $x$ in $H$ that $y = z$ and hence $u = f(y) = f(z) = v$.

By the left-right symmetry inherent in the previous argument, we can therefore conclude that $f$ maps left (resp., right) cancellative elements of $H$ to left (resp., right) cancellative elements of $K$. In particular, it maps cancellative elements to cancellative elements. 
\end{remark}


%


\begin{remark}
\label{rem:2.1(3)}
Let $H$ and $K$ be semigroups, and let $f$ be an isomorphism from a downward complete subsemigroup $P$ of $\mathcal P(H)$ to a downward complete sub\-sem\-i\-group $Q$ of $\mathcal P(K)$. In addition, assume $H$ is commutative. Given $u, v \in K$, there then exist $X, Y \in P$ with $f(X) = \{u\}$ and $f(Y) = \{v\}$, since $f$ is surjective and $Q$ contains every one-element subset of $K$. On the other hand, it is straightforward from the commutativity of $H$ that $XY = YX$.
As a result, we have $\{u\} \{v\} = f(XY) = f(YX) = \{v\} \{u\}$ and hence $uv = vu$, which ultimately proves that $K$ is itself a commutative semigroup (cf.~Proposition 1.3(a) in \cite{Pin1987}).
\end{remark}

\begin{remark}
\label{rem:2.1(4)}
Assume $f \colon H \to K$ is a semigroup homomorphism and let $F$ be the function 
$
\mathcal P(H) \to \mathcal P(K) \colon X \mapsto f[X]$. It is immediate that
\begin{equation*}
\begin{split}
F(XY) & = \{f(xy) \colon x \in X,\, y \in Y\} = \{f(x)f(y) \colon x \in X,\, y \in Y\} = F(X) F(Y),
\end{split}
\end{equation*}
for all $X, Y \in \mathcal P(H)$. Consequently, $F$ is a  homomorphism $\mathcal P(H) \to \mathcal P(K)$.

On the other hand, if $f$ is injective and $F(X_1) = F(X_2)$ for some $X_1, X_2 \in \mathcal P(H)$, then $X_1 = X_2$, and hence $F$ is itself injective; otherwise, there would exist either $x \in X_1$ with $f(x) \in F(X_1) \setminus F(X_2)$ or $x \in X_2$ with $f(x) \in F(X_2) \setminus F(X_1)$. 
Moreover, if $f$ is surjective and $Y$ is a non-empty subset of $K$, then $X := \{x \in X \colon f(x) \in Y\}$ is a non-empty subset of $H$ with $F(X) = Y$, which ultimately means that also $F$ is surjective.

Now, suppose that $f$ is an isomorphism. It then follows from the above that $F$ is an iso\-mor\-phism from $\mathcal P(H)$ to $\mathcal P(K)$. In addition, $f$ being a bijection implies that a subset of $H$ and its image under $f$ have the same cardinality. 

It is therefore clear (see Example \ref{exa:1.1} for the notation) that, for every infinite cardinal $\kappa$, the function $F$ restricts to an isomorphism $\mathcal P_{<\kappa}(H) \to \mathcal P_{<\kappa}(K)$ as well as to an isomorphism $\mathcal P_{\le \kappa}(H) \to \mathcal P_{\le \kappa}(K)$. Most notably, $F$ restricts to an iso\-mor\-phism from $\mathcal P_\fin(H)$ to $\mathcal P_\fin(K)$.
\end{remark}

We are finally ready to prove our main result and show, in the subsequent corollary, that Question \ref{quest:2} has a positive answer in the class of cancellative commutative semigroups.

\begin{theorem}\label{thm:2.4}
If $H$ and $K$ are cancellative semigroups and either of them is commutative, then every isomorphism from a downward complete subsemigroup of $\mathcal P(H)$ to a downward complete subsemigroup of $\mathcal P(K)$ restricts to an isomorphism $H \to K$.
\end{theorem}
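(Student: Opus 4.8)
The plan is to combine Proposition \ref{prop:01} with Remarks \ref{rem:2.1(1)} and \ref{rem:2.1(3)} so as to show that the given isomorphism carries singletons bijectively to singletons, and that the induced map between $H$ and $K$ is a semigroup isomorphism. Throughout, I denote the isomorphism by $f \colon P \to Q$, where $P$ is a downward complete subsemigroup of $\mathcal P(H)$ and $Q$ is a downward complete subsemigroup of $\mathcal P(K)$.

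The first step is to reduce to the case where \emph{both} $H$ and $K$ are commutative. By hypothesis one of them is: if $H$ is commutative, then Remark \ref{rem:2.1(3)} yields that $K$ is too, while if $K$ is commutative, the same remark applied to the inverse isomorphism $f^{-1} \colon Q \to P$ yields that $H$ is commutative. So in every case both $H$ and $K$ are cancellative commutative semigroups. Next, I would pin down the cancellative elements on each side. Since $H$ is cancellative, every element of $H$ is a cancellative element of $H$; hence, by Proposition \ref{prop:01}, the cancellative elements of $P$ are precisely the singletons $\{u\}$ with $u \in H$ (recall, from downward completeness, that $P$ contains every one-element subset of $H$). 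The same argument applied to $K$ shows that the cancellative elements of $Q$ are exactly the singletons $\{v\}$ with $v \in K$.

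Now Remark \ref{rem:2.1(1)} gives that $f$ sends cancellative elements of $P$ to cancellative elements of $Q$, and $f^{-1}$ sends cancellative elements of $Q$ to cancellative elements of $P$. In view of the previous step, this means that $f$ restricts to a bijection from the singletons of $P$ onto the singletons of $Q$. I would then define $g \colon H \to K$ by letting $g(u)$ be the unique element of $K$ with $f(\{u\}) = \{g(u)\}$, so that $g$ is a bijection. Finally, since $f$ is a homomorphism and $\{u\}\{v\} = \{uv\}$ in $\mathcal P(H)$, we obtain $\{g(uv)\} = f(\{uv\}) = f(\{u\})f(\{v\}) = \{g(u)g(v)\}$, whence $g(uv) = g(u)g(v)$ and $g$ is an isomorphism $H \to K$. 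Up to the embedding $x \mapsto \{x\}$ of Example \ref{exa:1.1}, this $g$ is exactly the restriction of $f$ to the singletons, which proves the claim.

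I expect the only genuinely delicate point to be the reduction to two-sided commutativity together with the simultaneous use of Proposition \ref{prop:01} on both $P$ and $Q$ (the latter requiring Remark \ref{rem:2.1(1)} for both $f$ and $f^{-1}$, so as to get surjectivity onto singletons and not merely an inclusion). Once the cancellative elements are identified as the singletons on each side, the remainder is a routine transport of the homomorphism structure across $f$, and the heavy lifting has already been done in Proposition \ref{prop:01}.
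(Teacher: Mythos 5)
Your proposal is correct and follows essentially the same route as the paper's own proof: Remark \ref{rem:2.1(3)} (applied to $f$ or $f^{-1}$) to get commutativity on both sides, Proposition \ref{prop:01} to identify the cancellative elements of $P$ and $Q$ with the singletons, and Remark \ref{rem:2.1(1)} in both directions to see that $f$ carries singletons bijectively onto singletons. The only difference is that you spell out the final verification that the induced map $g$ is a homomorphism, which the paper leaves implicit.
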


\begin{proof}
Let $P$ be a downward complete subsemigroup of $\mathcal P(H)$ and $Q$ be a downward complete subsemigroup of $\mathcal P(K)$, and suppose that there is an isomorphism $f$ from $P$ to $Q$. Since either of $H$ and $K$ is commutative (by hypothesis), we have from Remark \ref{rem:2.1(3)} that both $P$ and $Q$ are. On the other hand, each of $H$ and $K$ is cancellative. Therefore, we gather from Proposition \ref{prop:01} that the cancellative elements of $\mathcal P(H)$ (resp., of $\mathcal P(K)$) are all and only the one-element subsets of $H$ (resp., of $K$). It then follows by Remark \ref{rem:2.1(1)} that, for every $x \in H$, there is a uniquely determined $y \in K$ such that $f(\{x\}) = \allowbreak \{y\}$. Considering that the same argument also applies to the functional inverse of $f$, this suffices to conclude that $f$ restricts to an iso\-mor\-phism from $H$ to $K$ (as wished).
\end{proof}

\begin{corollary}\label{cor:2.4} 
If $H$ and $K$ are cancellative semigroups and either of them is commutative, then the following conditions are equivalent:
\begin{enumerate}[label=\textup{(\alph{*})}]
\item\label{cor:2.4(a)} $H$ and $K$ are globally isomorphic, i.e., $\mathcal P(H)$ is isomorphic to $\mathcal P(K)$.
\item\label{cor:2.4(b)} $\mathcal P_\fin(H)$ is isomorphic to $\mathcal P_\fin(K)$.
\item\label{cor:2.4(c)} $H$ is isomorphic to $K$.
\end{enumerate}
\end{corollary}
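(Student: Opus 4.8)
The plan is to establish the three equivalences by routing everything through condition \ref{cor:2.4(c)}, since the substantive work has already been done in Theorem \ref{thm:2.4} and Remark \ref{rem:2.1(4)}. Concretely, I would prove the two forward implications \ref{cor:2.4(c)} $\Rightarrow$ \ref{cor:2.4(a)} and \ref{cor:2.4(c)} $\Rightarrow$ \ref{cor:2.4(b)}, then the two reverse implications \ref{cor:2.4(a)} $\Rightarrow$ \ref{cor:2.4(c)} and \ref{cor:2.4(b)} $\Rightarrow$ \ref{cor:2.4(c)}; as each of \ref{cor:2.4(a)} and \ref{cor:2.4(b)} thereby becomes logically tied to \ref{cor:2.4(c)}, this yields the full equivalence.

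For the forward implications, I would start from a semigroup isomorphism $f \colon H \to K$. By Remark \ref{rem:2.1(4)}, the induced map $F \colon \mathcal P(H) \to \mathcal P(K)$, $X \mapsto f[X]$, is an isomorphism of large power semigroups, which is exactly \ref{cor:2.4(a)}; and the same remark tells us that $F$ restricts to an isomorphism $\mathcal P_\fin(H) \to \mathcal P_\fin(K)$, which is \ref{cor:2.4(b)}. So both \ref{cor:2.4(c)} $\Rightarrow$ \ref{cor:2.4(a)} and \ref{cor:2.4(c)} $\Rightarrow$ \ref{cor:2.4(b)} are immediate.

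For the reverse implications, the operative observation --- itself recorded in Example \ref{exa:1.1} --- is that $\mathcal P(S)$ and $\mathcal P_\fin(S)$ are each downward complete subsemigroups of $\mathcal P(S)$, for any semigroup $S$. Thus, if \ref{cor:2.4(a)} holds, an isomorphism $\mathcal P(H) \to \mathcal P(K)$ is in particular an isomorphism from a downward complete subsemigroup of $\mathcal P(H)$ to a downward complete subsemigroup of $\mathcal P(K)$; since $H$ and $K$ are cancellative with one of them commutative, Theorem \ref{thm:2.4} applies and the isomorphism restricts to an isomorphism $H \to K$, giving \ref{cor:2.4(c)}. The implication \ref{cor:2.4(b)} $\Rightarrow$ \ref{cor:2.4(c)} is identical, now taking $\mathcal P_\fin(H)$ and $\mathcal P_\fin(K)$ as the relevant downward complete subsemigroups.

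I do not anticipate any genuine obstacle: the corollary is essentially a repackaging of Theorem \ref{thm:2.4} together with Remark \ref{rem:2.1(4)}, with all the real difficulty already absorbed into Proposition \ref{prop:01}. The only point deserving a moment's care is to register that both $\mathcal P(S)$ and $\mathcal P_\fin(S)$ themselves qualify as downward complete subsemigroups of $\mathcal P(S)$, since it is precisely this that licenses invoking Theorem \ref{thm:2.4} in each of the two reverse implications.
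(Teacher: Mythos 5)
Your proposal is correct and follows exactly the same route as the paper: both forward implications come from Remark \ref{rem:2.1(4)}, and both reverse implications come from applying Theorem \ref{thm:2.4} after noting (via Example \ref{exa:1.1}) that $\mathcal P(S)$ and $\mathcal P_\fin(S)$ are downward complete subsemigroups of $\mathcal P(S)$. Nothing is missing.
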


\begin{proof}
We noted in Example \ref{exa:1.1} that $\mathcal P(S)$ and $\mathcal P_\fin(S)$ are both downward complete sub\-semi\-groups of $\mathcal P(S)$ for every semigroup $S$. So, it is immediate from Theorem \ref{thm:2.4} that \ref{cor:2.4(a)} $\Rightarrow$ \ref{cor:2.4(c)} and \ref{cor:2.4(b)} $\Rightarrow$ \ref{cor:2.4(c)}. This concludes the proof, as we know from Remark \ref{rem:2.1(4)} that \ref{cor:2.4(c)} $\Rightarrow$ \ref{cor:2.4(a)} and \ref{cor:2.4(c)} $\Rightarrow$ \ref{cor:2.4(b)}. 
\end{proof}

Corollary \ref{cor:2.4} is a generalization of a theorem of Bienvenu and Geroldinger \cite[Theorem 3.2(3)]{Bie-Ger-22} ac\-cord\-ing to which the finitary power semigroups of two numerical monoids $H$ and $K$ are isomorphic if and only if $H = \allowbreak K$. But there is a subtle detail here to work out.

In fact, Bienvenu and Geroldinger's result is actually about \emph{monoid} iso\-mor\-phisms (note that the finitary power semigroup of a monoid $S$ with identity $1_S$ is a monoid with identity $\{1_S\}$). 
However, two numerical monoids are isomorphic if and only if they are equal \cite[Theorem 3]{Hig-969}; and it is easy to show that a surjective \emph{semigroup} homomorphism $f$ from a monoid $H$ to a monoid $K$ is, a fortiori, a \emph{monoid} homomorphism, i.e., it maps the identity $1_H$ of $H$ to the identity $1_K$ of $K$. 

For, set $e := f(1_H)$. Each $y \in \allowbreak K$ is the image under $f$ of an element $x \in \allowbreak H$, which implies $ye = \allowbreak f(x)  f(1_H) = \allowbreak f(x1_H) = \allowbreak f(x) = \allowbreak y$. It follows, by taking $y = \allowbreak 1_K$, that $f(1_H) = \allowbreak e = \allowbreak 1_K e = \allowbreak 1_K$.

\section*{Acknowledgements}

I am thankful to Paolo Leonetti (Insubria University, Italy) for the idea underlying Example \ref{exa:1.2}, and to Daniel Smertnig (University of Ljubljana, Slovenia) for pointing out that two monoids are monoid-isomorphic if and only if they are sem\-i\-group-isomorphic (cf.~the last lines of Sect.~\ref{sect:2}).

%
%

\end{document}